\newtheorem{theorem}{Theorem}
\theoremstyle{plain}
\newtheorem{corollary}{Corollary}
\newtheorem{definition}{Definition}
\newtheorem{lemma}{Lemma}
\numberwithin{equation}{section}
\begin{document}
\title[Hermite-Hadamard Type Inequalities]{HERMITE-HADAMARD TYPE
INEQUALITIES FOR FUNCTIONS WHOSE DERIVATIVES ARE STRONGLY $\varphi -$CONVEX}
\author{\.{I}mdat \.{I}\c{s}can}
\address{Department of Mathematics, Faculty of Arts and Sciences,\\
Giresun University, 28100, Giresun, Turkey.}
\email{imdat.iscan@giresun.edu.tr; imdati@yahoo.com}
\author{Erdal Unluyol}
\address{Department of Mathematics, Faculty of Arts and Sciences,\\
Ordu University, 52200, Ordu, Turkey.}
\email{erdalunluyol@odu.edu.tr; eunluyol@yahoo.com}
\date{May 26, 2012}
\subjclass{26D10, 26A51}
\keywords{Hermite-Hadamard's inequalities, $\varphi $-convex functions,
strongly $\varphi $-convex with modulus c\TEXTsymbol{>}0, }

\begin{abstract}
In this paper several inequalities of the right-hand side of
Hermite-Hadamard's inequality are obtained for the class of functions whose
derivatives in absolutely value at certain powers are strongly $\varphi $%
-convex with modulus c\TEXTsymbol{>}0.
\end{abstract}

\maketitle

\section{Introduction}

\bigskip Let $f:I\subset \mathbb{R\rightarrow R}$ be a convex function
defined on the interval $I$ of real numbers and $a,b\in I$ with $a<b$, then

\begin{equation}
f\left( \frac{a+b}{2}\right) \leq \frac{1}{b-a}\dint\limits_{a}^{b}f(x)dx%
\leq \frac{f(a)+f(b)}{2}\text{.}  \label{1-1}
\end{equation}

This doubly inequality is known in the literature as Hermite-Hadamard
integral inequality for convex functions. This inequality plays an important
role in convex analysis and it has a huge literature dealing with its
applications, various generalizations and refinements (see \cite%
{BP03,BP04,DA98,DP00,KBOP07,PP00,SOD10} ).

Let us consider a function $\varphi :[a,b]\longrightarrow \lbrack a,b]$,
where $[a,b]\subset 
\mathbb{R}
.$Youness have defined the $\varphi -$convex functions in \cite{Y99} :

\begin{definition}
A function $f:[a,b]\mathbb{\rightarrow R}$ is said to be $\varphi -$convex
on $[a,b]$ if for every two points $x,y\in \lbrack a,b]$ and $t\in \left[ 0,1%
\right] $ the following inequality holds:%
\begin{equation*}
f\left( t\varphi (x)+(1-t)\varphi (y)\right) \leq tf\left( \varphi
(x)\right) +(1-t)f\left( \varphi (y)\right) .
\end{equation*}
\end{definition}

Obviously, if function $\varphi $ is the identity, then the classical
convexity is obtained from the previous definition.

Recall also that a function $f:I\mathbb{\rightarrow R}$ is called strongly
convex with modulus $c>0$, if%
\begin{equation*}
f\left( tx+(1-t)y\right) \leq tf\left( x\right) +(1-t)f\left( y\right)
-ct(1-t)\left( x-y\right) ^{2}
\end{equation*}%
for all $x,y\in I$ and $t\in \left( 0,1\right) $. Strongly convex functions
have been introduced by Polyak in \cite{P66} and they play an important role
in optimization theory and mathematical economics. Various properties and
applications of them can be found in the literature see \cite{MN10,NP11,P66}.

Let $\left( X,\left\Vert .\right\Vert \right) $ be a real normed space, $D$
stands for a convex subset of $X$, $\varphi :D\longrightarrow D$ is a given
function and $c$ is a positive constant. In \cite{S12} Sarikaya have
introduced the notion of the strongly $\varphi -$convex functions with
modulus $c$ and some properties of them. Moreover in his paper, Sarikaya
have presented a version Hermite-Hadamard-type inequalities for strongly $%
\varphi -$convex functions as follows:

\begin{definition}
A function $f:D\mathbb{\rightarrow R}$ is said to be strongly $\varphi -$%
convex with modulus $c$ if 
\begin{equation*}
f\left( t\varphi (x)+(1-t)\varphi (y)\right) \leq tf\left( \varphi
(x)\right) +(1-t)f\left( \varphi (y)\right) -ct(1-t)\left\Vert \varphi
(x)-\varphi (y)\right\Vert ^{2}
\end{equation*}%
for all $x,y\in D$ and $t\in \left[ 0,1\right] $.
\end{definition}

The notion of $\varphi -$convex function corresponds to the case $c=0.$If
function $\varphi $ is the identity, then the strongly convexity with
modulus $c>0$ is obtained from the previous definition

\begin{theorem}
\label{A.1}If $f:[a,b]\mathbb{\rightarrow R}$ is strongly $\varphi -$convex
with modulus $c>0$ for the continuous function $\varphi
:[a,b]\longrightarrow \lbrack a,b],$ then%
\begin{eqnarray*}
f\left( \frac{\varphi (a)+\varphi (b)}{2}\right) +\frac{c}{12}\left( \varphi
(b)-\varphi (a)\right) ^{2} &\leq &\frac{1}{\varphi (b)-\varphi (a)}%
\dint\limits_{\varphi (a)}^{\varphi (b)}f(x)dx \\
&\leq &\frac{f\left( \varphi (a)\right) +f\left( \varphi (b)\right) }{2}-%
\frac{c}{6}\left( \varphi (b)-\varphi (a)\right) ^{2}
\end{eqnarray*}
\end{theorem}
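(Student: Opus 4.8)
The plan is to establish the two inequalities separately. The right‑hand estimate comes from integrating the defining inequality of strong $\varphi$‑convexity along the segment joining $\varphi(a)$ and $\varphi(b)$; the left‑hand estimate comes from a symmetrization trick that, crucially, uses the continuity of $\varphi$ via the intermediate value theorem. Throughout one may assume $\varphi(a)\neq\varphi(b)$, since otherwise every integral degenerates and the claim is trivial; for brevity write $\Delta=(\varphi(b)-\varphi(a))^{2}$.

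For the right‑hand inequality I would put $x=a$, $y=b$ in the definition of strongly $\varphi$‑convex, obtaining, for all $t\in[0,1]$,
\[
f\bigl(t\varphi(a)+(1-t)\varphi(b)\bigr)\le t f(\varphi(a))+(1-t)f(\varphi(b))-c\,t(1-t)\Delta .
\]
Integrating in $t$ over $[0,1]$ and using the change of variable $x=t\varphi(a)+(1-t)\varphi(b)$ — which turns $\int_{0}^{1}f\bigl(t\varphi(a)+(1-t)\varphi(b)\bigr)\,dt$ into $\tfrac{1}{\varphi(b)-\varphi(a)}\int_{\varphi(a)}^{\varphi(b)}f(x)\,dx$ — together with $\int_{0}^{1}t\,dt=\int_{0}^{1}(1-t)\,dt=\tfrac12$ and $\int_{0}^{1}t(1-t)\,dt=\tfrac16$, yields exactly the second inequality.

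For the left‑hand inequality the idea is to recover the midpoint $m=\tfrac{\varphi(a)+\varphi(b)}{2}$ from a symmetric pair of points. For $t\in[0,1]$ set $u_{t}=t\varphi(a)+(1-t)\varphi(b)$ and $v_{t}=(1-t)\varphi(a)+t\varphi(b)$, so that $\tfrac{u_{t}+v_{t}}{2}=m$ and $u_{t}-v_{t}=(2t-1)(\varphi(a)-\varphi(b))$. Both $u_{t}$ and $v_{t}$ lie in the interval with endpoints $\varphi(a)$ and $\varphi(b)$; since $\varphi$ is continuous on $[a,b]$, the intermediate value theorem shows this interval is contained in $\varphi([a,b])$, so $u_{t}=\varphi(x_{t})$ and $v_{t}=\varphi(y_{t})$ for suitable $x_{t},y_{t}\in[a,b]$. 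Applying the definition of strong $\varphi$‑convexity to $x_{t},y_{t}$ with parameter $t=\tfrac12$ gives
\[
f(m)\le \tfrac12 f(u_{t})+\tfrac12 f(v_{t})-\tfrac{c}{4}(2t-1)^{2}\Delta .
\]
Integrating in $t$ over $[0,1]$, using $\int_{0}^{1}f(u_{t})\,dt=\int_{0}^{1}f(v_{t})\,dt=\tfrac{1}{\varphi(b)-\varphi(a)}\int_{\varphi(a)}^{\varphi(b)}f(x)\,dx$ (the same substitution as before, once forwards and once backwards) and $\int_{0}^{1}(2t-1)^{2}\,dt=\tfrac13$, produces $f(m)\le \tfrac{1}{\varphi(b)-\varphi(a)}\int_{\varphi(a)}^{\varphi(b)}f(x)\,dx-\tfrac{c}{12}\Delta$, which is the first inequality after rearrangement.

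The one genuinely delicate point is the step where continuity of $\varphi$ enters: one must know that the auxiliary points $u_{t},v_{t}$ actually belong to the range of $\varphi$ so that the defining inequality can be invoked, and this is precisely the intermediate value theorem — it is the reason continuity of $\varphi$ is assumed. Everything else (the two changes of variable and the three elementary integrals $\int_{0}^{1}t\,dt$, $\int_{0}^{1}t(1-t)\,dt$, $\int_{0}^{1}(2t-1)^{2}\,dt$) is routine, and one should dispose of the degenerate case $\varphi(a)=\varphi(b)$ at the outset.
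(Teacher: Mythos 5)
Your proof is correct, but note that the paper itself contains no proof of Theorem \ref{A.1}: the statement is imported verbatim from Sarikaya \cite{S12}, so there is nothing internal to compare your argument against. Your two-step argument is the natural one and it checks out. The right-hand estimate is the direct integration over $t\in[0,1]$ of the defining inequality with $x=a$, $y=b$; the constants $\int_0^1 t\,dt=\tfrac12$ and $\int_0^1 t(1-t)\,dt=\tfrac16$ give exactly the term $-\tfrac{c}{6}(\varphi(b)-\varphi(a))^2$. The left-hand estimate follows from the midpoint case $t=\tfrac12$ of the definition applied to the symmetric pair $u_t,v_t$, and $\int_0^1(2t-1)^2\,dt=\tfrac13$ produces the $\tfrac{c}{12}$ after rearrangement. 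You also correctly isolated the one genuinely delicate point: the defining inequality can only be invoked at points of the form $\varphi(x),\varphi(y)$ with $x,y\in[a,b]$, so one must verify that $u_t$ and $v_t$ lie in $\varphi([a,b])$, and the intermediate value theorem supplies exactly this — it is the only place the continuity of $\varphi$ is used. (The same observation shows that $f$ is ordinarily convex on the segment joining $\varphi(a)$ and $\varphi(b)$, which justifies the existence of all the integrals you write down, and your dismissal of the degenerate case $\varphi(a)=\varphi(b)$ is appropriate since the averaged integral is not even defined there.)
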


The main aim of this paper is to establish new inequalities of
Hermite-Hadamard type for the class of functions whose derivatives at
certain powers are strongly $\varphi -$convex with modulus c.

\section{Inequalities for functions whose derivatives are strongly $\protect%
\varphi -$convex with modulus c}

In order to prove our main resuls we need the following lemma:

\begin{lemma}
\label{2.1}Let $f:I\subset \mathbb{R\rightarrow R}$ be a differentiable
mapping on $I^{\circ }$, $a,b\in I$ with $a<b$. If $f^{\prime }\in L[a,b]$
and $\varphi :[a,b]\longrightarrow \lbrack a,b]$ a continuous function with $%
\varphi (a)<\varphi (b).$ then the following equality holds:\bigskip 
\begin{equation}
\frac{f\left( \varphi (a)\right) +f\left( \varphi (b)\right) }{2}-\frac{1}{%
\varphi (b)-\varphi (a)}\dint\limits_{\varphi (a)}^{\varphi (b)}f(x)dx=\frac{%
\varphi (b)-\varphi (a)}{2}\dint\limits_{0}^{1}\left( 2t-1\right) f^{\prime
}\left( t\varphi (b)+(1-t)\varphi (a)\right) dt  \label{1}
\end{equation}
\end{lemma}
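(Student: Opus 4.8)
The plan is to verify the identity by computing the right-hand side directly via a single integration by parts. Set
\begin{equation*}
J=\int_{0}^{1}\left( 2t-1\right) f^{\prime }\left( t\varphi (b)+(1-t)\varphi (a)\right) \,dt,
\end{equation*}
and integrate by parts with $u=2t-1$ and $dv=f^{\prime }\left( t\varphi (b)+(1-t)\varphi (a)\right) \,dt$. Since $f$ is differentiable on $I^{\circ }$ and $\varphi (a)<\varphi (b)$, a primitive of $dv$ is $v=\frac{1}{\varphi (b)-\varphi (a)}f\left( t\varphi (b)+(1-t)\varphi (a)\right) $, while $du=2\,dt$; the hypothesis $f^{\prime }\in L[a,b]$ ensures every integral below is well defined.

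Integration by parts then yields
\begin{equation*}
J=\left[ \left( 2t-1\right) \frac{f\left( t\varphi (b)+(1-t)\varphi (a)\right) }{\varphi (b)-\varphi (a)}\right] _{0}^{1}-\frac{2}{\varphi (b)-\varphi (a)}\int_{0}^{1}f\left( t\varphi (b)+(1-t)\varphi (a)\right) \,dt.
\end{equation*}
Evaluating the boundary term at $t=1$ and $t=0$ collapses it to $\frac{f(\varphi (b))+f(\varphi (a))}{\varphi (b)-\varphi (a)}$. For the remaining integral, the change of variable $x=t\varphi (b)+(1-t)\varphi (a)$, with $dx=(\varphi (b)-\varphi (a))\,dt$ and the endpoints mapping to $\varphi (a)$ and $\varphi (b)$, turns it into $\frac{1}{\varphi (b)-\varphi (a)}\int_{\varphi (a)}^{\varphi (b)}f(x)\,dx$.

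Substituting back gives
\begin{equation*}
J=\frac{f(\varphi (a))+f(\varphi (b))}{\varphi (b)-\varphi (a)}-\frac{2}{\left( \varphi (b)-\varphi (a)\right) ^{2}}\int_{\varphi (a)}^{\varphi (b)}f(x)\,dx,
\end{equation*}
and multiplying through by $\frac{\varphi (b)-\varphi (a)}{2}$ produces precisely the left-hand side of \eqref{1}. There is no real obstacle in this argument; the only points requiring attention are the legitimacy of the integration by parts (guaranteed by differentiability of $f$ together with $f^{\prime }\in L[a,b]$) and the fact that $\varphi (b)\neq \varphi (a)$, which is hypothesized. I note in passing that continuity of $\varphi $ is not actually needed for the lemma, since only the endpoint values $\varphi (a)$ and $\varphi (b)$ appear; one could equally run the computation in reverse starting from $\int_{\varphi (a)}^{\varphi (b)}f(x)\,dx$, but the forward calculation above is the most direct.
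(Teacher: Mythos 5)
Your proof is correct and is exactly the argument the paper intends: the paper's proof consists of the single sentence that the identity follows by partial integration of the right-hand side, and your computation (integration by parts with $u=2t-1$, followed by the substitution $x=t\varphi(b)+(1-t)\varphi(a)$) fills in precisely those details. Your side remark that continuity of $\varphi$ is not needed for this lemma is also accurate.
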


\begin{proof}
By using partial integration in right hand of (\ref{1}) equality, the proof
is obvious.
\end{proof}

\bigskip The next theorem gives a new the upper Hermite-Hadamard inequality
for strongly $\varphi -$convex functions with modulus c as follows:

\begin{theorem}
\label{1.1}Let $f:I\subset \mathbb{R\rightarrow R}$ be a differentiable
mapping on $I^{\circ }$, $a,b\in I$ with $a<b$ such that $f^{\prime }\in
L[a,b].$ If $\left\vert f^{\prime }\right\vert ^{q}$ is strongly $\varphi -$%
convex functions with modulus c on $[a,b]$ for the continuous function $%
\varphi :[a,b]\longrightarrow \lbrack a,b]$ with $\varphi (a)<\varphi (b)$
and $q\geq 1$ then 
\begin{eqnarray}
&&\left\vert \frac{f\left( \varphi (a)\right) +f\left( \varphi (b)\right) }{2%
}-\frac{1}{\varphi (b)-\varphi (a)}\dint\limits_{\varphi (a)}^{\varphi
(b)}f(x)dx\right\vert  \label{2-2} \\
&\leq &\frac{\varphi (b)-\varphi (a)}{4}\allowbreak \left( \frac{\left\vert
f^{\prime }(\varphi (b))\right\vert ^{q}+\left\vert f^{\prime }(\varphi
(a))\right\vert ^{q}}{2}-\frac{c}{8}\left( \varphi (b)-\varphi (a)\right)
^{2}\right) ^{\frac{1}{q}}  \notag
\end{eqnarray}

\begin{proof}
Suppose that $q=1$. From Lemma \ref{2.1} and using the strongly $\varphi -$%
convexity functions with modulus c of $\left\vert f^{\prime }\right\vert $,
we have

\begin{eqnarray*}
&&\left\vert \frac{f\left( \varphi (a)\right) +f\left( \varphi (b)\right) }{2%
}-\frac{1}{\varphi (b)-\varphi (a)}\dint\limits_{\varphi (a)}^{\varphi
(b)}f(x)dx\right\vert \\
&\leq &\frac{\varphi (b)-\varphi (a)}{2}\dint\limits_{0}^{1}\left\vert
2t-1\right\vert \left\vert f^{\prime }(t\varphi (b)+(1-t)\varphi
(a))\right\vert dt \\
&\leq &\frac{\varphi (b)-\varphi (a)}{2}\dint\limits_{0}^{1}\left\vert
2t-1\right\vert \left[ t\left\vert f^{\prime }(\varphi (b))\right\vert
+(1-t)\left\vert f^{\prime }(\varphi (a))\right\vert -ct\left( 1-t\right)
\left( \varphi (b)-\varphi (a)\right) ^{2}\right] dt
\end{eqnarray*}

We have 
\begin{equation*}
\dint\limits_{0}^{1}\left\vert 2t-1\right\vert tdt=\frac{1}{4}%
,~~\dint\limits_{0}^{1}\left\vert 2t-1\right\vert (1-t)dt=\frac{1}{4}
\end{equation*}

and 
\begin{equation*}
\dint\limits_{0}^{1}\left\vert 2t-1\right\vert t\left( 1-t\right) dt=\frac{1%
}{16}
\end{equation*}

hence we obtain

\begin{eqnarray*}
&&\left\vert \frac{f\left( \varphi (a)\right) +f\left( \varphi (b)\right) }{2%
}-\frac{1}{\varphi (b)-\varphi (a)}\dint\limits_{\varphi (a)}^{\varphi
(b)}f(x)dx\right\vert \\
&\leq &\frac{\varphi (b)-\varphi (a)}{4}\left( \frac{\left\vert f^{\prime
}(\varphi (b))\right\vert +\left\vert f^{\prime }(\varphi (a))\right\vert }{2%
}-\frac{c}{8}\left( \varphi (b)-\varphi (a)\right) ^{2}\right)
\end{eqnarray*}

which copmletes the proof for this case.

Suppose now that $q\in \left( 1,\infty \right) $. From Lemma \ref{2.1} and
using the H\"{o}lder's integral inequality, we have%
\begin{eqnarray}
&&\dint\limits_{0}^{1}\left\vert 2t-1\right\vert \left\vert f^{\prime
}(t\varphi (b)+(1-t)\varphi (a))\right\vert dt  \label{2} \\
&\leq &\frac{\varphi (b)-\varphi (a)}{2}\left(
\dint\limits_{0}^{1}\left\vert 2t-1\right\vert dt\right) ^{\frac{q-1}{q}%
}\left( \dint\limits_{0}^{1}\left\vert 2t-1\right\vert \left\vert f^{\prime
}(t\varphi (b)+(1-t)\varphi (a))\right\vert ^{q}dt\right) ^{\frac{1}{q}} 
\notag
\end{eqnarray}

Since $\left\vert f^{\prime }\right\vert ^{q}$ is strongly $\varphi -$convex
functions with modulus c on $[a,b]$, we know that for every $t\in \left[ 0,1%
\right] $%
\begin{equation}
\left\vert f^{\prime }(t\varphi (b)+(1-t)\varphi (a))\right\vert ^{q}\leq
t\left\vert f^{\prime }(\varphi (b))\right\vert ^{q}+(1-t)\left\vert
f^{\prime }(\varphi (a))\right\vert ^{q}-ct\left( 1-t\right) \left( \varphi
(b)-\varphi (a)\right) ^{2}.  \label{3}
\end{equation}

From \ref{1}, \ref{2} and \ref{3}, we have

\begin{eqnarray*}
&&\left\vert \frac{f\left( \varphi (a)\right) +f\left( \varphi (b)\right) }{2%
}-\frac{1}{\varphi (b)-\varphi (a)}\dint\limits_{\varphi (a)}^{\varphi
(b)}f(x)dx\right\vert \\
&\leq &\frac{\varphi (b)-\varphi (a)}{4}\allowbreak \left( \frac{\left\vert
f^{\prime }(\varphi (b))\right\vert ^{q}+\left\vert f^{\prime }(\varphi
(a))\right\vert ^{q}}{2}-\frac{c}{8}\left( \varphi (b)-\varphi (a)\right)
^{2}\right) ^{\frac{1}{q}}
\end{eqnarray*}

which completes the proof.
\end{proof}
\end{theorem}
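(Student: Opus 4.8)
The plan is to derive the inequality from the integral identity of Lemma \ref{2.1}, pass to absolute values, and then estimate the resulting integral by combining the strong $\varphi $-convexity of $\left\vert f^{\prime }\right\vert ^{q}$ with the power-mean (H\"{o}lder) inequality. The only auxiliary facts needed are the elementary integrals
\[
\int_{0}^{1}\left\vert 2t-1\right\vert t\,dt=\frac{1}{4},\qquad \int_{0}^{1}\left\vert 2t-1\right\vert (1-t)\,dt=\frac{1}{4},\qquad \int_{0}^{1}\left\vert 2t-1\right\vert t(1-t)\,dt=\frac{1}{16},
\]
together with $\int_{0}^{1}\left\vert 2t-1\right\vert \,dt=\frac{1}{2}$.

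First I would treat the case $q=1$. Lemma \ref{2.1} and the triangle inequality give
\[
\left\vert \frac{f(\varphi (a))+f(\varphi (b))}{2}-\frac{1}{\varphi (b)-\varphi (a)}\int_{\varphi (a)}^{\varphi (b)}f(x)\,dx\right\vert \le \frac{\varphi (b)-\varphi (a)}{2}\int_{0}^{1}\left\vert 2t-1\right\vert \left\vert f^{\prime }(t\varphi (b)+(1-t)\varphi (a))\right\vert dt .
\]
Bounding the integrand by the defining inequality for strong $\varphi $-convexity with modulus $c$ applied to $\left\vert f^{\prime }\right\vert $, and then inserting the three integrals above, reduces the right-hand side to $\frac{\varphi (b)-\varphi (a)}{4}\left( \frac{\left\vert f^{\prime }(\varphi (b))\right\vert +\left\vert f^{\prime }(\varphi (a))\right\vert }{2}-\frac{c}{8}(\varphi (b)-\varphi (a))^{2}\right) $, which is the asserted bound for $q=1$.

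For $q\in (1,\infty )$ I would proceed the same way, except that after Lemma \ref{2.1} and the triangle inequality I would write $\left\vert 2t-1\right\vert \left\vert f^{\prime }(\cdot )\right\vert =\left\vert 2t-1\right\vert ^{1-1/q}\big(\left\vert 2t-1\right\vert ^{1/q}\left\vert f^{\prime }(\cdot )\right\vert \big)$ and apply H\"{o}lder's inequality with exponents $q/(q-1)$ and $q$. This produces the factor $\left( \int_{0}^{1}\left\vert 2t-1\right\vert dt\right) ^{1-1/q}=(1/2)^{1-1/q}$ times $\left( \int_{0}^{1}\left\vert 2t-1\right\vert \left\vert f^{\prime }(t\varphi (b)+(1-t)\varphi (a))\right\vert ^{q}dt\right) ^{1/q}$. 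Then I would bound $\left\vert f^{\prime }(t\varphi (b)+(1-t)\varphi (a))\right\vert ^{q}$ using the strong $\varphi $-convexity of $\left\vert f^{\prime }\right\vert ^{q}$ and evaluate the remaining integral with the same three integrals, which leaves $\frac{1}{4}\left\vert f^{\prime }(\varphi (b))\right\vert ^{q}+\frac{1}{4}\left\vert f^{\prime }(\varphi (a))\right\vert ^{q}-\frac{c}{16}(\varphi (b)-\varphi (a))^{2}$ underneath the $1/q$-th power.

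The one point requiring care is that the constants must collapse correctly: the prefactor $\frac{\varphi (b)-\varphi (a)}{2}$, the H\"{o}lder factor $(1/2)^{1-1/q}$, and the factor $(1/4)^{1/q}$ pulled out of the last integral satisfy $\frac{\varphi (b)-\varphi (a)}{2}\cdot (1/2)^{1-1/q}\cdot (1/4)^{1/q}=\frac{\varphi (b)-\varphi (a)}{4}\cdot (1/2)^{1/q}$, so the estimate becomes exactly $\frac{\varphi (b)-\varphi (a)}{4}\left( \frac{\left\vert f^{\prime }(\varphi (b))\right\vert ^{q}+\left\vert f^{\prime }(\varphi (a))\right\vert ^{q}}{2}-\frac{c}{8}(\varphi (b)-\varphi (a))^{2}\right) ^{1/q}$. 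No individual step is genuinely hard; since the $q=1$ case is the specialization $q\to 1$ of this argument, one could alternatively present a single unified computation rather than splitting into two cases.
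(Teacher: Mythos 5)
Your proposal is correct and follows essentially the same route as the paper: the $q=1$ case via the three elementary integrals of $\left\vert 2t-1\right\vert t$, $\left\vert 2t-1\right\vert (1-t)$, $\left\vert 2t-1\right\vert t(1-t)$, and the $q>1$ case via the power-mean (H\"{o}lder) splitting of the weight $\left\vert 2t-1\right\vert$, with the constants collapsing exactly as you describe. Your closing remark that the two cases could be unified into a single computation is a small stylistic improvement over the paper, not a different argument.
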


\begin{corollary}
Suppose that all the assumptions of Theorem\ref{1.1} are satisfied, in this
case:

\begin{enumerate}
\item For $c=0$, i.e. if $\left\vert f^{\prime }\right\vert ^{q}$ is $%
\varphi -$convex functions, we have%
\begin{eqnarray}
&&\left\vert \frac{f\left( \varphi (a)\right) +f\left( \varphi (b)\right) }{2%
}-\frac{1}{\varphi (b)-\varphi (a)}\dint\limits_{\varphi (a)}^{\varphi
(b)}f(x)dx\right\vert   \label{4} \\
&\leq &\frac{\varphi (b)-\varphi (a)}{4}\left( \frac{\left\vert f^{\prime
}(\varphi (b))\right\vert ^{q}+\left\vert f^{\prime }(\varphi
(a))\right\vert ^{q}}{2}\right) ^{\frac{1}{q}}  \notag
\end{eqnarray}

\item For $\varphi (t)=t$ in (\ref{4}) inequality :%
\begin{eqnarray*}
&&\left\vert \frac{f\left( a\right) +f\left( b\right) }{2}-\frac{1}{b-a}%
\dint\limits_{a}^{b}f(x)dx\right\vert \\
&\leq &\frac{b-a}{4}\left( \frac{\left\vert f^{\prime }(b)\right\vert
^{q}+\left\vert f^{\prime }(a)\right\vert ^{q}}{2}\right) ^{\frac{1}{q}}
\end{eqnarray*}%
we get the same result in \cite[Theorem 1]{PP00}.

\item If we take $\varphi (t)=t$ in (\ref{2-2}) for strongly convex
functions with modulus $c>0$ on $[a,b]$ inequality, then we obtain%
\begin{eqnarray*}
&&\left\vert \frac{f\left( a\right) +f\left( b\right) }{2}-\frac{1}{b-a}%
\dint\limits_{a}^{b}f(x)dx\right\vert \\
&\leq &\frac{b-a}{4}\allowbreak \left( \frac{\left\vert f^{\prime
}(b)\right\vert ^{q}+\left\vert f^{\prime }(a)\right\vert ^{q}}{2}-\frac{c}{8%
}\left( b-a\right) ^{2}\right) ^{\frac{1}{q}}
\end{eqnarray*}
\end{enumerate}
\end{corollary}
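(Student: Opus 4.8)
The plan is to obtain all three items as direct specializations of Theorem \ref{1.1}, substituting the indicated parameter values into inequality (\ref{2-2}) and simplifying, while invoking the two reductions of the convexity notions already recorded in Section 1. No new estimation is required: each part follows by fixing a parameter and observing how both the hypothesis and the conclusion degenerate.

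For item (1), I would set $c=0$ in (\ref{2-2}). As noted in Section 1, the case $c=0$ of strong $\varphi$-convexity with modulus $c$ is precisely ordinary $\varphi$-convexity, so the hypothesis of Theorem \ref{1.1} is met with $\left\vert f^{\prime }\right\vert ^{q}$ merely $\varphi$-convex. The modulus term $-\frac{c}{8}\left( \varphi (b)-\varphi (a)\right) ^{2}$ then vanishes identically, and (\ref{2-2}) collapses to the asserted inequality (\ref{4}).

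For item (2), I would put $\varphi (t)=t$ in (\ref{4}). Since the identity map recovers classical convexity from $\varphi$-convexity, the hypothesis becomes that $\left\vert f^{\prime }\right\vert ^{q}$ is convex in the usual sense. With $\varphi (a)=a$ and $\varphi (b)=b$, every occurrence of $\varphi (a)$ and $\varphi (b)$ simplifies to $a$ and $b$, so the right-hand side becomes $\frac{b-a}{4}\left( \frac{\left\vert f^{\prime }(b)\right\vert ^{q}+\left\vert f^{\prime }(a)\right\vert ^{q}}{2}\right) ^{1/q}$, which is exactly the bound of \cite[Theorem 1]{PP00}. For item (3), I would instead put $\varphi (t)=t$ in the full inequality (\ref{2-2}), retaining the modulus term: here the identity map turns strong $\varphi$-convexity with modulus $c$ into strong convexity with modulus $c$, so the hypothesis reads that $\left\vert f^{\prime }\right\vert ^{q}$ is strongly convex, and replacing $\varphi (a),\varphi (b)$ by $a,b$ throughout yields the stated inequality with the penalty $-\frac{c}{8}\left( b-a\right) ^{2}$ preserved.

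The main point requiring attention is not any analytic difficulty but the bookkeeping of the hypotheses: one must confirm that each specialization genuinely reduces the admissible class of functions as claimed. The two facts to invoke are exactly those stated in Section 1 — that $c=0$ yields $\varphi$-convexity and that $\varphi =\mathrm{id}$ yields ordinary (strong) convexity. Once these reductions are quoted, each of the three inequalities is a one-line substitution into the already-established Theorem \ref{1.1}.
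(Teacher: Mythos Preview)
Your proposal is correct and matches the paper's treatment: the corollary is stated without a separate proof, each item being an immediate specialization of Theorem \ref{1.1} obtained by substituting $c=0$ or $\varphi(t)=t$ into (\ref{2-2}) exactly as you describe. There is nothing further to add.
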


\begin{theorem}
\label{2.1.1}Let $f:I\subset \mathbb{R\rightarrow R}$ be a differentiable
mapping on $I^{\circ }$, $a,b\in I$ with $a<b$ such that $f^{\prime }\in
L[a,b].$ If $\left\vert f^{\prime }\right\vert ^{q}$ is strongly $\varphi -$%
convex functions with modulus $c$ on $[a,b]$ for the continuous function $%
\varphi :[a,b]\longrightarrow \lbrack a,b]$ with $\varphi (a)<\frac{\varphi
(a)+\varphi (b)}{2}<\varphi (b)$ and $q>1$ then 
\begin{eqnarray}
&&\left\vert \frac{f\left( \varphi (a)\right) +f\left( \varphi (b)\right) }{2%
}-\frac{1}{\varphi (b)-\varphi (a)}\dint\limits_{\varphi (a)}^{\varphi
(b)}f(x)dx\right\vert \leq \frac{\varphi (b)-\varphi (a)}{4}\left( \frac{1}{%
(p+1)}\right) ^{\frac{1}{p}}  \label{5} \\
&&\times \left( \frac{1}{2}\right) ^{\frac{1}{q}}\left[ 
\begin{array}{c}
\left( \left\vert f^{\prime }\left( \frac{\varphi (a)+\varphi (b)}{2}\right)
\right\vert ^{q}+\left\vert f^{\prime }\left( \varphi (a)\right) \right\vert
^{q}-\frac{c}{3}\left( \varphi (b)-\varphi (a)\right) ^{2}\right) ^{\frac{1}{%
q}} \\ 
+\left( \left\vert f^{\prime }\left( \frac{\varphi (a)+\varphi (b)}{2}%
\right) \right\vert ^{q}+\left\vert f^{\prime }\left( \varphi (b)\right)
\right\vert ^{q}-\frac{c}{3}\left( \varphi (b)-\varphi (a)\right)
^{2}\right) ^{\frac{1}{q}}%
\end{array}%
\right] ,  \notag
\end{eqnarray}%
where $\frac{1}{p}+\frac{1}{q}=1.$
\end{theorem}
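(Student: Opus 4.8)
The plan is to start from the identity in Lemma~\ref{2.1} and split the single integral over $[0,1]$ at the midpoint $t=1/2$, writing
\[
\left\vert \frac{f(\varphi(a))+f(\varphi(b))}{2}-\frac{1}{\varphi(b)-\varphi(a)}\int_{\varphi(a)}^{\varphi(b)}f(x)\,dx\right\vert
\leq \frac{\varphi(b)-\varphi(a)}{2}\left(\int_{0}^{1/2}+\int_{1/2}^{1}\right)\left\vert 2t-1\right\vert\,\left\vert f'(t\varphi(b)+(1-t)\varphi(a))\right\vert\,dt.
\]
On each subinterval I would apply H\"older's inequality with exponents $p$ and $q$. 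The first factor is $\left(\int_{0}^{1/2}|2t-1|^{p}\,dt\right)^{1/p}=\left(\int_{1/2}^{1}|2t-1|^{p}\,dt\right)^{1/p}$, and a direct substitution $u=1-2t$ (resp.\ $u=2t-1$) shows each equals $\left(\tfrac{1}{2(p+1)}\right)^{1/p}\cdot 2^{-1/p}$; collecting the constants will reproduce the prefactor $\frac{\varphi(b)-\varphi(a)}{4}\left(\frac{1}{p+1}\right)^{1/p}\left(\frac12\right)^{1/q}$ in \eqref{5}. This bookkeeping of constants is routine but must be done carefully to land exactly on the stated coefficients.

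The second factor on each subinterval is $\left(\int |2t-1|\,|f'(t\varphi(b)+(1-t)\varphi(a))|^{q}\,dt\right)^{1/q}$, and here I would invoke the strong $\varphi$-convexity of $|f'|^{q}$ in the form \eqref{3}. The key observation is that for $t\in[0,1/2]$ we can write $t\varphi(b)+(1-t)\varphi(a)=(2t)\cdot\frac{\varphi(a)+\varphi(b)}{2}+(1-2t)\varphi(a)$, i.e.\ it is a convex combination of $\frac{\varphi(a)+\varphi(b)}{2}$ and $\varphi(a)$ with parameter $2t$; symmetrically, for $t\in[1/2,1]$ it is a convex combination of $\frac{\varphi(a)+\varphi(b)}{2}$ and $\varphi(b)$ with parameter $2t-1$. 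Applying the strong $\varphi$-convexity inequality with these combinations (valid because $\frac{\varphi(a)+\varphi(b)}{2}$ lies in $[\varphi(a),\varphi(b)]$ by the hypothesis, so it is $\varphi$ of some point, or more simply one just uses the definition at the relevant arguments) gives, on $[0,1/2]$,
\[
|f'(t\varphi(b)+(1-t)\varphi(a))|^{q}\leq (2t)\left|f'\!\left(\tfrac{\varphi(a)+\varphi(b)}{2}\right)\right|^{q}+(1-2t)\left|f'(\varphi(a))\right|^{q}-c(2t)(1-2t)\left(\tfrac{\varphi(b)-\varphi(a)}{2}\right)^{2},
\]
and the analogous bound on $[1/2,1]$ with $\varphi(b)$ replacing $\varphi(a)$ and $2t-1$ replacing $2t$.

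It then remains to compute the elementary integrals $\int_{0}^{1/2}|2t-1|(2t)\,dt$, $\int_{0}^{1/2}|2t-1|(1-2t)\,dt$, and $\int_{0}^{1/2}|2t-1|(2t)(1-2t)\,dt$ (and their mirror images on $[1/2,1]$); the substitution $u=1-2t$ turns the first two into $\tfrac14\int_0^1 u(1-u)\,du=\tfrac{1}{24}$ each and the third into $\tfrac{1}{8}\int_0^1 u^2(1-u)\,du=\tfrac{1}{96}$. Substituting these values, pulling out the common factor, and recognizing that $\tfrac{1}{24}\big/\big(\tfrac{1}{24}\big)$ normalizes the bracket to $\left|f'\!\left(\tfrac{\varphi(a)+\varphi(b)}{2}\right)\right|^{q}+|f'(\varphi(a))|^{q}$ minus the $c$-term $\tfrac{1}{96}\cdot\tfrac{(\varphi(b)-\varphi(a))^2}{1/4}\cdot\tfrac{1}{1/24}$-scaled constant, which works out to $\tfrac{c}{3}(\varphi(b)-\varphi(a))^2$, yields exactly the two summands in \eqref{5}. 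The only real subtlety — the "hard part" — is organizing the midpoint splitting and the two convex-combination rewrites so that the constants from the three families of integrals combine into the clean coefficients $\left(\tfrac{1}{p+1}\right)^{1/p}$, $\left(\tfrac12\right)^{1/q}$, and $\tfrac{c}{3}$; everything else is bounding term-by-term and adding the two pieces via $(\alpha+\beta)\le$ itself (no further inequality needed since we simply keep the sum of the two H\"older bounds).
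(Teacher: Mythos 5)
Your overall strategy --- split the integral at $t=\tfrac12$, apply H\"older on each half, and exploit the representation $t\varphi(b)+(1-t)\varphi(a)=(2t)\tfrac{\varphi(a)+\varphi(b)}{2}+(1-2t)\varphi(a)$ together with strong $\varphi$-convexity at the midpoint --- is the right one, and is essentially the paper's (the paper bounds $\int_0^{1/2}|f'(\cdot)|^q\,dt$ by the right-hand Hermite--Hadamard inequality of Theorem \ref{A.1} on the half-interval, which is the same as integrating your pointwise bound). But the execution has a genuine gap in exactly the place you flag as the hard part. First, your H\"older pairing is inconsistent: if the first factor is $\bigl(\int_0^{1/2}|2t-1|^p\,dt\bigr)^{1/p}$, then the second must be $\bigl(\int_0^{1/2}|f'|^q\,dt\bigr)^{1/q}$ \emph{without} the weight $|2t-1|$; keeping $|2t-1|$ inside the $q$-factor would force the first factor to be $\bigl(\int_0^{1/2}|2t-1|\,dt\bigr)^{1/p}$ instead. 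Second, with the weight retained your elementary integrals are wrong and, worse, unequal: $\int_0^{1/2}(1-2t)(2t)\,dt=\tfrac{1}{12}$ while $\int_0^{1/2}(1-2t)(1-2t)\,dt=\tfrac{1}{6}$, so the bracket would come out as $\tfrac{1}{12}\bigl|f'\bigl(\tfrac{\varphi(a)+\varphi(b)}{2}\bigr)\bigr|^{q}+\tfrac16|f'(\varphi(a))|^{q}-\tfrac{c}{96}(\varphi(b)-\varphi(a))^{2}$, which cannot be normalized to the symmetric expression appearing in (\ref{5}).

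If you drop the weight from the $q$-factor (the consistent choice, and the paper's), your pointwise strong-convexity bound does yield equal coefficients, since $\int_0^{1/2}2t\,dt=\int_0^{1/2}(1-2t)\,dt=\tfrac14$ and $\int_0^{1/2}2t(1-2t)\,dt=\tfrac1{12}$. However, the squared gap in your pointwise bound is $\bigl(\tfrac{\varphi(b)-\varphi(a)}{2}\bigr)^{2}$, so after pulling out the factor $\tfrac14$ the $c$-term inside each bracket is $\tfrac{c}{12}(\varphi(b)-\varphi(a))^{2}$, not $\tfrac{c}{3}(\varphi(b)-\varphi(a))^{2}$; your route therefore establishes (\ref{5}) only with $\tfrac{c}{12}$ in place of $\tfrac{c}{3}$. (The same defect occurs in the paper's own argument: applying Theorem \ref{A.1} on the half-interval gives the correction $\tfrac{c}{6}\bigl(\tfrac{\varphi(b)-\varphi(a)}{2}\bigr)^{2}=\tfrac{c}{24}(\varphi(b)-\varphi(a))^{2}$, i.e.\ $\tfrac{c}{12}$ inside the bracket, so the stated constant $\tfrac{c}{3}$ is not actually proved there either.) You should either correct the constant or supply a genuinely sharper estimate; as written, the bookkeeping you defer does not close.
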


\begin{proof}
From Lemma \ref{2.1} and using the H\"{o}lder inequality, we have%
\begin{eqnarray*}
&&\left\vert \frac{f\left( \varphi (a)\right) +f\left( \varphi (b)\right) }{2%
}-\frac{1}{\varphi (b)-\varphi (a)}\dint\limits_{\varphi (a)}^{\varphi
(b)}f(x)dx\right\vert \\
&\leq &\frac{\varphi (b)-\varphi (a)}{2}\left( \dint\limits_{0}^{\frac{1}{2}%
}\left( 1-2t\right) ^{p}dt\right) ^{\frac{1}{p}}\left( \dint\limits_{0}^{%
\frac{1}{2}}~\left\vert f^{\prime }(t\varphi (b)+(1-t)\varphi
(a))\right\vert ^{q}dt\right) ^{\frac{1}{q}} \\
&&+\frac{\varphi (b)-\varphi (a)}{2}\left( \dint\limits_{\frac{1}{2}%
}^{1}\left( 2t-1\right) ^{p}dt\right) ^{\frac{1}{p}}\left( \dint\limits_{%
\frac{1}{2}}^{1}\left\vert f^{\prime }(t\varphi (b)+(1-t)\varphi
(a))\right\vert ^{q}dt\right) ^{\frac{1}{q}} \\
&\leq &\frac{\varphi (b)-\varphi (a)}{2}\left( \frac{1}{2(p+1)}\right) ^{%
\frac{1}{p}}\left( \frac{1}{4}\right) ^{\frac{1}{q}}\left[ 
\begin{array}{c}
\left( \left\vert f^{\prime }\left( \frac{\varphi (a)+\varphi (b)}{2}\right)
\right\vert ^{q}+\left\vert f^{\prime }\left( \varphi (a)\right) \right\vert
^{q}-\frac{c}{3}\left( \varphi (b)-\varphi (a)\right) ^{2}\right) ^{\frac{1}{%
q}} \\ 
+\left( \left\vert f^{\prime }\left( \frac{\varphi (a)+\varphi (b)}{2}%
\right) \right\vert ^{q}+\left\vert f^{\prime }\left( \varphi (b)\right)
\right\vert ^{q}-\frac{c}{3}\left( \varphi (b)-\varphi (a)\right)
^{2}\right) ^{\frac{1}{q}}%
\end{array}%
\right]
\end{eqnarray*}%
where we use the fact that%
\begin{equation*}
\dint\limits_{0}^{\frac{1}{2}}\left( 1-2t\right) ^{p}dt=\dint\limits_{\frac{1%
}{2}}^{1}\left( 2t-1\right) ^{p}dt=\frac{1}{2(p+1)}
\end{equation*}%
and by Theorem\ref{A.1} we get%
\begin{eqnarray*}
\dint\limits_{0}^{\frac{1}{2}}~\left\vert f^{\prime }(t\varphi
(b)+(1-t)\varphi (a))\right\vert ^{q}dt &=&\frac{1}{2}\frac{2}{\varphi
(b)-\varphi (a)}\dint\limits_{\varphi (a)}^{\frac{\varphi (a)+\varphi (b)}{2}%
}\left\vert f^{\prime }(x)\right\vert ^{q}~dx \\
&\leq &\left( \frac{1}{4}\right) \left( \left\vert f^{\prime }\left( \frac{%
\varphi (a)+\varphi (b)}{2}\right) \right\vert ^{q}+\left\vert f^{\prime
}\left( \varphi (a)\right) \right\vert ^{q}-\frac{c}{3}\left( \varphi
(b)-\varphi (a)\right) ^{2}\right) , \\
\frac{\lambda +\mu }{\mu }\dint\limits_{\frac{\lambda }{\lambda +\mu }%
}^{1}\left\vert f^{\prime }(tb+(1-t)a)\right\vert ^{q}~dt &=&\frac{1}{2}%
\frac{2}{\varphi (b)-\varphi (a)}\dint\limits_{\frac{\varphi (a)+\varphi (b)%
}{2}}^{\varphi (b)}\left\vert f^{\prime }(x)\right\vert ^{q}~dx \\
&\leq &\left( \frac{1}{4}\right) \left( \left\vert f^{\prime }\left( \frac{%
\varphi (a)+\varphi (b)}{2}\right) \right\vert ^{q}+\left\vert f^{\prime
}\left( \varphi (b)\right) \right\vert ^{q}-\frac{c}{3}\left( \varphi
(b)-\varphi (a)\right) ^{2}\right) .
\end{eqnarray*}
\end{proof}

\begin{corollary}
Suppose that all the assumptions of Theorem\ref{2.1.1} are satisfied, in
this case:

\begin{enumerate}
\item Since $\left\vert f^{\prime }\right\vert ^{q}$ is strongly $\varphi -$%
convex functions with modulus $c,$ from (\ref{5}) inequality we get%
\begin{eqnarray*}
&&\left\vert \frac{f\left( \varphi (a)\right) +f\left( \varphi (b)\right) }{2%
}-\frac{1}{\varphi (b)-\varphi (a)}\dint\limits_{\varphi (a)}^{\varphi
(b)}f(x)dx\right\vert \\
&\leq &\frac{\varphi (b)-\varphi (a)}{4}\left( \frac{1}{(p+1)}\right) ^{%
\frac{1}{p}}\left( \frac{1}{2}\right) ^{\frac{1}{q}}\left[ 
\begin{array}{c}
\left( \frac{\left\vert f^{\prime }\left( \varphi (b)\right) \right\vert
^{q}+3\left\vert f^{\prime }\left( \varphi (a)\right) \right\vert ^{q}}{2}-%
\frac{7c}{12}\left( \varphi (b)-\varphi (a)\right) ^{2}\right) ^{\frac{1}{q}}
\\ 
+\left( \frac{3\left\vert f^{\prime }\left( \varphi (b)\right) \right\vert
^{q}+\left\vert f^{\prime }\left( \varphi (a)\right) \right\vert ^{q}}{2}-%
\frac{7c}{12}\left( \varphi (b)-\varphi (a)\right) ^{2}\right) ^{\frac{1}{q}}%
\end{array}%
\right]
\end{eqnarray*}

\item For $c=0$, i.e. if $\left\vert f^{\prime }\right\vert ^{q}$ is $%
\varphi -$convex functions, we have%
\begin{eqnarray}
&&\left\vert \frac{f\left( \varphi (a)\right) +f\left( \varphi (b)\right) }{2%
}-\frac{1}{\varphi (b)-\varphi (a)}\dint\limits_{\varphi (a)}^{\varphi
(b)}f(x)dx\right\vert   \label{6} \\
&\leq &\frac{\varphi (b)-\varphi (a)}{4}\left( \frac{1}{(p+1)}\right) ^{%
\frac{1}{p}}\left( \frac{1}{2}\right) ^{\frac{1}{q}}\left[ 
\begin{array}{c}
\left( \left\vert f^{\prime }\left( \frac{\varphi (a)+\varphi (b)}{2}\right)
\right\vert ^{q}+\left\vert f^{\prime }\left( \varphi (a)\right) \right\vert
^{q}\right) ^{\frac{1}{q}} \\ 
+\left( \left\vert f^{\prime }\left( \frac{\varphi (a)+\varphi (b)}{2}%
\right) \right\vert ^{q}+\left\vert f^{\prime }\left( \varphi (b)\right)
\right\vert ^{q}\right) ^{\frac{1}{q}}%
\end{array}%
\right]   \notag
\end{eqnarray}

\item For $\varphi (t)=t$ in (\ref{6}) inequality :%
\begin{eqnarray*}
&&\left\vert \frac{f\left( a\right) +f\left( b\right) }{2}-\frac{1}{b-a}%
\dint\limits_{a}^{b}f(x)dx\right\vert \\
&\leq &\frac{b-a}{4}\left( \frac{1}{(p+1)}\right) ^{\frac{1}{p}}\left(
\allowbreak \frac{1}{2}\right) ^{\frac{1}{q}}\left[ \left( \left\vert
f^{\prime }\left( \frac{a+b}{2}\right) \right\vert ^{q}+\left\vert f^{\prime
}\left( a\right) \right\vert ^{q}\right) ^{\frac{1}{q}}+\left( \left\vert
f^{\prime }\left( \frac{a+b}{2}\right) \right\vert ^{q}+\left\vert f^{\prime
}\left( b\right) \right\vert ^{q}\right) ^{\frac{1}{q}}\right]
\end{eqnarray*}%
we get the same result in \cite{KBOP07} for $s=1.$

\item If we take $\varphi (t)=t$ in (\ref{5}) for strongly convex functions
with modulus $c>0$ on $[a,b]$ inequality, then we obtain%
\begin{eqnarray*}
&&\left\vert \frac{f\left( a\right) +f\left( b\right) }{2}-\frac{1}{b-a}%
\dint\limits_{a}^{b}f(x)dx\right\vert \\
&\leq &\frac{b-a}{4}\left( \frac{1}{(p+1)}\right) ^{\frac{1}{p}}\left(
\allowbreak \frac{1}{2}\right) ^{\frac{1}{q}}\left[ 
\begin{array}{c}
\left( \left\vert f^{\prime }\left( \frac{a+b}{2}\right) \right\vert
^{q}+\left\vert f^{\prime }\left( a\right) \right\vert ^{q}-\frac{c}{3}%
\left( b-a\right) ^{2}\right) ^{\frac{1}{q}} \\ 
+\left( \left\vert f^{\prime }\left( \frac{a+b}{2}\right) \right\vert
^{q}+\left\vert f^{\prime }\left( b\right) \right\vert ^{q}-\frac{c}{3}%
\left( b-a\right) ^{2}\right) ^{\frac{1}{q}}%
\end{array}%
\right]
\end{eqnarray*}
\end{enumerate}
\end{corollary}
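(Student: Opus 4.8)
The plan is to start from the identity in Lemma~\ref{2.1}, pass to absolute values, and split the $t$-integral at the midpoint $t=\frac{1}{2}$, where the weight $2t-1$ changes sign:
\[
\left\vert \frac{f(\varphi(a))+f(\varphi(b))}{2}-\frac{1}{\varphi(b)-\varphi(a)}\int_{\varphi(a)}^{\varphi(b)}f(x)\,dx\right\vert \le \frac{\varphi(b)-\varphi(a)}{2}\bigl(I_{1}+I_{2}\bigr),
\]
where $I_{1}=\int_{0}^{1/2}(1-2t)\,\vert f'(t\varphi(b)+(1-t)\varphi(a))\vert\,dt$ and $I_{2}=\int_{1/2}^{1}(2t-1)\,\vert f'(t\varphi(b)+(1-t)\varphi(a))\vert\,dt$. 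To each of $I_{1}$ and $I_{2}$ I would apply H\"{o}lder's inequality with conjugate exponents $p$ and $q$ (this is where $q>1$ is used), factoring out $\left(\int_{0}^{1/2}(1-2t)^{p}\,dt\right)^{1/p}$ and $\left(\int_{1/2}^{1}(2t-1)^{p}\,dt\right)^{1/p}$; both of these elementary integrals equal $\frac{1}{2(p+1)}$.

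The next step is to estimate the two remaining $q$-th power integrals $J_{1}=\int_{0}^{1/2}\vert f'(t\varphi(b)+(1-t)\varphi(a))\vert^{q}\,dt$ and $J_{2}=\int_{1/2}^{1}\vert f'(t\varphi(b)+(1-t)\varphi(a))\vert^{q}\,dt$. The substitution $x=t\varphi(b)+(1-t)\varphi(a)$ rewrites $J_{1}$ as $\frac{1}{2}\cdot\frac{2}{\varphi(b)-\varphi(a)}\int_{\varphi(a)}^{(\varphi(a)+\varphi(b))/2}\vert f'(x)\vert^{q}\,dx$ and $J_{2}$ as $\frac{1}{2}\cdot\frac{2}{\varphi(b)-\varphi(a)}\int_{(\varphi(a)+\varphi(b))/2}^{\varphi(b)}\vert f'(x)\vert^{q}\,dx$. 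Here the hypothesis $\varphi(a)<\frac{\varphi(a)+\varphi(b)}{2}<\varphi(b)$ together with continuity of $\varphi$ gives, by the intermediate value theorem, a point $c_{0}\in(a,b)$ with $\varphi(c_{0})=\frac{\varphi(a)+\varphi(b)}{2}$; this is precisely what lets me apply Theorem~\ref{A.1} to $\vert f'\vert^{q}$ on the ranges $[\varphi(a),\varphi(c_{0})]$ and $[\varphi(c_{0}),\varphi(b)]$, bounding each averaged integral by the corresponding right-hand Hermite-Hadamard bound, i.e. by $\frac{1}{2}\bigl(\vert f'((\varphi(a)+\varphi(b))/2)\vert^{q}+\vert f'(\varphi(a))\vert^{q}\bigr)$ decreased by a fixed multiple of $(\varphi(b)-\varphi(a))^{2}$, and similarly with $\varphi(b)$ in place of $\varphi(a)$ on the second range.

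Finally I would collect the constants: multiplying $\frac{\varphi(b)-\varphi(a)}{2}$, the H\"{o}lder factor $\left(\frac{1}{2(p+1)}\right)^{1/p}$, and the factor $\left(\frac{1}{4}\right)^{1/q}$ coming from the two averaging steps, and using $\frac{1}{p}+\frac{1}{q}=1$ to see that $2^{-1/p}\cdot 2^{-2/q}=\frac{1}{2}\cdot 2^{-1/q}$, collapses the prefactor to $\frac{\varphi(b)-\varphi(a)}{4}\left(\frac{1}{p+1}\right)^{1/p}\left(\frac{1}{2}\right)^{1/q}$; adding the two bracketed terms then yields precisely the asserted inequality. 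I expect the main obstacle to be the bookkeeping of the last paragraph: one must make sure the strong $\varphi$-convexity of $\vert f'\vert^{q}$ on $[a,b]$ genuinely produces the strong-convexity type control of the average of $\vert f'\vert^{q}$ over the half-ranges (this can also be carried out directly, bypassing Theorem~\ref{A.1}, by integrating the defining inequality $\vert f'(s\varphi(c_{0})+(1-s)\varphi(a))\vert^{q}\le s\vert f'(\varphi(c_{0}))\vert^{q}+(1-s)\vert f'(\varphi(a))\vert^{q}-cs(1-s)(\varphi(c_{0})-\varphi(a))^{2}$ over $s\in[0,1]$), and then to propagate the coefficient of $(\varphi(b)-\varphi(a))^{2}$ correctly through the substitutions so that it lands on the value appearing in the statement.
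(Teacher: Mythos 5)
What you have written is, in substance, a proof of Theorem~\ref{2.1.1} itself, i.e.\ of inequality (\ref{5}): the splitting of the $t$-integral at $t=\frac{1}{2}$, H\"{o}lder's inequality on each half with the factor $\left(\frac{1}{2(p+1)}\right)^{1/p}$, the substitution $x=t\varphi(b)+(1-t)\varphi(a)$, and the application of Theorem~\ref{A.1} on each half-range are exactly the paper's proof of that theorem. (Your observation that one needs a point $c_{0}$ with $\varphi(c_{0})=\frac{\varphi(a)+\varphi(b)}{2}$ in order to invoke Theorem~\ref{A.1}, or else a direct integration of the defining inequality, is if anything more careful than the paper, which glosses over this.) But the statement you were asked to prove is the \emph{corollary} to that theorem, which takes (\ref{5}) as already established.

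Items (2)--(4) of the corollary are immediate substitutions ($c=0$, respectively $\varphi(t)=t$) and present no difficulty, though you do not carry them out. The genuine content is item (1), and your proposal never addresses it. There one must apply the definition of strong $\varphi$-convexity of $\left\vert f^{\prime }\right\vert ^{q}$ at $t=\frac{1}{2}$, namely
\begin{equation*}
\left\vert f^{\prime }\!\left( \tfrac{\varphi (a)+\varphi (b)}{2}\right)
\right\vert ^{q}\leq \tfrac{1}{2}\left\vert f^{\prime }(\varphi
(b))\right\vert ^{q}+\tfrac{1}{2}\left\vert f^{\prime }(\varphi
(a))\right\vert ^{q}-\tfrac{c}{4}\left( \varphi (b)-\varphi (a)\right) ^{2},
\end{equation*}
substitute this into each bracketed term of (\ref{5}), and combine $\frac{c}{4}+\frac{c}{3}=\frac{7c}{12}$ to produce the coefficients $\frac{\left\vert f^{\prime }(\varphi (b))\right\vert ^{q}+3\left\vert f^{\prime }(\varphi (a))\right\vert ^{q}}{2}$ and $\frac{7c}{12}\left( \varphi (b)-\varphi (a)\right) ^{2}$ appearing in the statement. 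Without that step the first inequality of the corollary is not derived at all, so the proposal has a real gap with respect to the claim it is meant to establish.
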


\begin{theorem}
\label{2.2}Let $f:I\subset \mathbb{R\rightarrow R}$ be a differentiable
mapping on $I^{\circ }$, $a,b\in I$ with $a<b$ such that $f^{\prime }\in
L[a,b].$ If $\left\vert f^{\prime }\right\vert ^{q}$ strongly $\varphi -$%
convex functions with modulus c on $[a,b]$ for the continuous function $%
\varphi :[a,b]\longrightarrow \lbrack a,b]$ with $\varphi (a)<\varphi (b)$
and $q>1$ then 
\begin{eqnarray}
&&\left\vert \frac{f\left( \varphi (a)\right) +f\left( \varphi (b)\right) }{2%
}-\frac{1}{\varphi (b)-\varphi (a)}\dint\limits_{\varphi (a)}^{\varphi
(b)}f(x)dx\right\vert  \label{7} \\
&\leq &\frac{\varphi (b)-\varphi (a)}{2}\left( \frac{1}{p+1}\right) ^{\frac{1%
}{p}}\left( \frac{\left\vert f^{\prime }(\varphi (b))\right\vert
^{q}+\left\vert f^{\prime }(\varphi (a))\right\vert ^{q}}{2}-\frac{c}{6}%
\left( \varphi (b)-\varphi (a)\right) ^{2}\right) ^{\frac{1}{q}}.  \notag
\end{eqnarray}%
where $\frac{1}{p}+\frac{1}{q}=1.$
\end{theorem}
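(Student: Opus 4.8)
The plan is to follow the same scheme as in the proof of Theorem \ref{1.1}, except that H\"older's inequality is now applied in its classical two-exponent form, pairing the $L^{p}$-norm of the weight $2t-1$ with the $L^{q}$-norm of $f'$, rather than via the power-mean inequality. Starting from the identity of Lemma \ref{2.1} and bringing the absolute value inside the integral, one obtains
\[
\left\vert \frac{f(\varphi(a))+f(\varphi(b))}{2}-\frac{1}{\varphi(b)-\varphi(a)}\int_{\varphi(a)}^{\varphi(b)}f(x)\,dx\right\vert \leq \frac{\varphi(b)-\varphi(a)}{2}\int_{0}^{1}\left\vert 2t-1\right\vert \left\vert f'\!\left(t\varphi(b)+(1-t)\varphi(a)\right)\right\vert dt .
\]

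Next I would apply H\"older's integral inequality with exponents $p$ and $q$, $\tfrac1p+\tfrac1q=1$, to the integral on the right, bounding it by $\left(\int_{0}^{1}\left\vert 2t-1\right\vert^{p}dt\right)^{1/p}\left(\int_{0}^{1}\left\vert f'\!\left(t\varphi(b)+(1-t)\varphi(a)\right)\right\vert^{q}dt\right)^{1/q}$. The first factor is elementary: the substitution $u=2t-1$ gives $\int_{0}^{1}\left\vert 2t-1\right\vert^{p}dt=\frac{1}{p+1}$. For the second factor I would invoke the strong $\varphi$-convexity of $\left\vert f'\right\vert^{q}$, that is, $\left\vert f'(t\varphi(b)+(1-t)\varphi(a))\right\vert^{q}\leq t\left\vert f'(\varphi(b))\right\vert^{q}+(1-t)\left\vert f'(\varphi(a))\right\vert^{q}-ct(1-t)(\varphi(b)-\varphi(a))^{2}$ for all $t\in[0,1]$, and integrate over $[0,1]$; using $\int_{0}^{1}t\,dt=\int_{0}^{1}(1-t)\,dt=\frac12$ and $\int_{0}^{1}t(1-t)\,dt=\frac16$, this factor is at most $\left(\frac{\left\vert f'(\varphi(b))\right\vert^{q}+\left\vert f'(\varphi(a))\right\vert^{q}}{2}-\frac{c}{6}(\varphi(b)-\varphi(a))^{2}\right)^{1/q}$. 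Multiplying the two factors together with the prefactor $\frac{\varphi(b)-\varphi(a)}{2}$ yields exactly inequality (\ref{7}).

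Every step is a routine estimate, so there is no real obstacle; the only things needing a little care are the value of the elementary integral $\int_{0}^{1}\left\vert 2t-1\right\vert^{p}dt=\frac{1}{p+1}$ and the bookkeeping of the modulus term, which here comes out with constant $\frac{c}{6}$ rather than the $\frac{c}{8}$ of Theorem \ref{1.1}: in the present argument the weight $\left\vert 2t-1\right\vert$ is absorbed entirely into the $L^{p}$ factor, so the strong convexity estimate is integrated against weight $1$, whereas in Theorem \ref{1.1} part of that weight multiplies the convexity estimate.
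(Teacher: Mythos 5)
Your proposal is correct and coincides with the paper's own proof: both start from Lemma \ref{2.1}, apply H\"older's inequality with exponents $p$ and $q$ to split off $\left(\int_{0}^{1}\left\vert 2t-1\right\vert^{p}dt\right)^{1/p}=\left(\frac{1}{p+1}\right)^{1/p}$, and then bound the remaining $L^{q}$ factor by integrating the strong $\varphi$-convexity estimate over $[0,1]$, which produces the $\frac{c}{6}$ term. Your closing remark correctly explains why the modulus constant differs from that of Theorem \ref{1.1}.
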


\begin{proof}
From Lemma \ref{2.1} and using the H\"{o}lder's integral inequality, we have
\ 
\begin{eqnarray*}
&&\left\vert \frac{f\left( \varphi (a)\right) +f\left( \varphi (b)\right) }{2%
}-\frac{1}{\varphi (b)-\varphi (a)}\dint\limits_{\varphi (a)}^{\varphi
(b)}f(x)dx\right\vert \\
&\leq &\frac{\varphi (b)-\varphi (a)}{2}\dint\limits_{0}^{1}\left\vert
2t-1\right\vert \left\vert f^{\prime }(t\varphi (b)+(1-t)\varphi
(a))\right\vert dt \\
&\leq &\frac{\varphi (b)-\varphi (a)}{2}\left(
\dint\limits_{0}^{1}\left\vert 2t-1\right\vert ^{p}dt\right) ^{\frac{1}{p}%
}\left( \dint\limits_{0}^{1}\left\vert f^{\prime }(t\varphi (b)+(1-t)\varphi
(a))\right\vert ^{q}dt\right) ^{\frac{1}{q}} \\
&\leq &\frac{\varphi (b)-\varphi (a)}{2}\left( \frac{1}{(p+1)}\right) ^{%
\frac{1}{p}}\left( \dint\limits_{0}^{1}t\left\vert f^{\prime }(\varphi
(b))\right\vert ^{q}+(1-t)\left\vert f^{\prime }(\varphi (a))\right\vert
^{q}-ct\left( 1-t\right) \left( \varphi (b)-\varphi (a)\right) ^{2}dt\right)
^{\frac{1}{q}} \\
&=&\frac{\varphi (b)-\varphi (a)}{2}\left( \frac{1}{(p+1)}\right) ^{\frac{1}{%
p}}\left( \frac{\left\vert f^{\prime }(\varphi (b))\right\vert
^{q}+\left\vert f^{\prime }(\varphi (a))\right\vert ^{q}}{2}-\frac{c}{6}%
\left( \varphi (b)-\varphi (a)\right) ^{2}\right) ^{\frac{1}{q}}.
\end{eqnarray*}
\end{proof}

\begin{corollary}
Suppose that all the assumptions of Theorem\ref{2.2} are satisfied, in this
case:
\end{corollary}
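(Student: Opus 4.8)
The plan is to derive every case in the corollary as an immediate specialization of inequality (\ref{7}) in Theorem \ref{2.2}, whose hypotheses are assumed to hold throughout; no fresh estimation is needed.

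For the $\varphi$-convex case I would put $c=0$ in (\ref{7}): the modulus term $-\frac{c}{6}\left(\varphi(b)-\varphi(a)\right)^{2}$ simply vanishes and one is left with
\[
\left\vert \frac{f(\varphi(a))+f(\varphi(b))}{2}-\frac{1}{\varphi(b)-\varphi(a)}\int_{\varphi(a)}^{\varphi(b)}f(x)\,dx\right\vert \leq \frac{\varphi(b)-\varphi(a)}{2}\left(\frac{1}{p+1}\right)^{\frac{1}{p}}\left(\frac{\left\vert f'(\varphi(b))\right\vert^{q}+\left\vert f'(\varphi(a))\right\vert^{q}}{2}\right)^{\frac{1}{q}}.
\]
Next, setting $\varphi(t)=t$ in this inequality (equivalently, in (\ref{7}) with $c=0$) collapses all occurrences of $\varphi$ to the identity and produces the classical bound for functions whose $q$-th power of the derivative is convex; here I would simply check that the resulting estimate matches verbatim the known result cited in the paper. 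Finally, taking $\varphi(t)=t$ directly in (\ref{7}) while keeping $c>0$ yields the version for strongly convex functions with modulus $c$.

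Each of these is a one-line substitution: the identity $\int_{0}^{1}\bigl(t\left\vert f'(\varphi(b))\right\vert^{q}+(1-t)\left\vert f'(\varphi(a))\right\vert^{q}-ct(1-t)\left(\varphi(b)-\varphi(a)\right)^{2}\bigr)\,dt=\frac{\left\vert f'(\varphi(b))\right\vert^{q}+\left\vert f'(\varphi(a))\right\vert^{q}}{2}-\frac{c}{6}\left(\varphi(b)-\varphi(a)\right)^{2}$ is already established inside the proof of Theorem \ref{2.2}, and the constant $\left(\frac{1}{p+1}\right)^{1/p}$ carries through unchanged. The only real (and minor) obstacle is bookkeeping: transporting the factor $\frac{\varphi(b)-\varphi(a)}{2}$ and the $L^{p}$-constant correctly through each substitution, noting that the condition $\varphi(a)<\varphi(b)$ is automatically inherited, and aligning the $c=0$, $\varphi=\mathrm{id}$ specialization with the exact normalization of the result cited in the paper.
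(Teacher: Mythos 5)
Your proposal is correct and matches exactly what the paper does (the corollary is stated without proof precisely because each item is the one-line substitution $c=0$ and/or $\varphi(t)=t$ in inequality (\ref{7})). Note only that your substitution in the third item correctly yields the factor $\frac{b-a}{2}$ and the term $\frac{c}{6}(b-a)^{2}$, whereas the paper prints $\frac{b-a}{4}$ and leaves $\left(\varphi(b)-\varphi(a)\right)^{2}$ unsubstituted — these appear to be typographical errors in the paper, not a gap in your argument.
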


\begin{enumerate}
\item For $c=0$, i.e. if $\left\vert f^{\prime }\right\vert ^{q}$ is $%
\varphi -$convex functions, we have%
\begin{eqnarray}
&&\left\vert \frac{f\left( \varphi (a)\right) +f\left( \varphi (b)\right) }{2%
}-\frac{1}{\varphi (b)-\varphi (a)}\dint\limits_{\varphi (a)}^{\varphi
(b)}f(x)dx\right\vert   \label{8} \\
&\leq &\frac{\varphi (b)-\varphi (a)}{2}\left( \frac{1}{p+1}\right) ^{\frac{1%
}{p}}\left( \frac{\left\vert f^{\prime }(\varphi (b))\right\vert
^{q}+\left\vert f^{\prime }(\varphi (a))\right\vert ^{q}}{2}\right) ^{\frac{1%
}{q}}  \notag
\end{eqnarray}

\item We obtained the same result in \cite[Theorem 2.3]{DA98} for $\varphi
(t)=t$ in (\ref{8}) inequality :%
\begin{eqnarray*}
&&\left\vert \frac{f\left( a\right) +f\left( b\right) }{2}-\frac{1}{b-a}%
\dint\limits_{a}^{b}f(x)dx\right\vert \\
&\leq &\frac{b-a}{2}\left( \frac{1}{(p+1)}\right) ^{\frac{1}{p}}\left( \frac{%
\left\vert f^{\prime }\left( a\right) \right\vert ^{q}+\left\vert f^{\prime
}\left( b\right) \right\vert ^{q}}{2}\right) ^{\frac{1}{q}}
\end{eqnarray*}

\item If we take $\varphi (t)=t$ in (\ref{7}) for strongly convex functions
with modulus $c>0$ on $[a,b]$ inequality, then we obtain%
\begin{eqnarray*}
&&\left\vert \frac{f\left( a\right) +f\left( b\right) }{2}-\frac{1}{b-a}%
\dint\limits_{a}^{b}f(x)dx\right\vert \\
&\leq &\frac{b-a}{4}\left( \frac{1}{(p+1)}\right) ^{\frac{1}{p}}\left( \frac{%
\left\vert f^{\prime }\left( a\right) \right\vert ^{q}+\left\vert f^{\prime
}\left( b\right) \right\vert ^{q}}{2}-\frac{c}{6}\left( \varphi (b)-\varphi
(a)\right) ^{2}\right) ^{\frac{1}{q}}
\end{eqnarray*}
\end{enumerate}

\end{document}